\theoremstyle{plain}
\newtheorem{theo}{Theorem}[section]
\newtheorem{theorem}[theo]{Theorem}
\newtheorem{theo*}{Theorem}
\newtheorem{proposition}[theo]{Proposition}
\newtheorem{lem}[theo]{Lemma}
\newtheorem{lemma}[theo]{Lemma}
\theoremstyle{remark}
\newtheorem{remark}[theo]{Remark}
\newcommand{\reff}[1]{(\ref{#1})}
\newcommand{\ind}{{\bf 1}}
\newcommand{\ca}{\mathcal A}
\newcommand{\cl}{\mathcal L}
\newcommand{\cm}{\mathcal M}
\newcommand{\cu}{\mathcal U}
\newcommand{\bt}{\mathbf{t}}
\newcommand{\N}{\mathbb{N}}
\renewcommand{\P}{\mathbb{P}}
\newcommand{\T}{\mathbb{T}}
\newcommand{\E}{\mathbb{E}}
\newcommand{\Card}{\mathrm{Card}}
\newcommand{\anc}{\mathrm{An}}
\newcommand{\dist}{\mathrm{dist}}
\title[Protected nodes]{Local limits of Galton-Watson trees
  conditioned on the number of protected nodes}
\author{Romain Abraham}
\address{Romain Abraham, Laboratoire MAPMO, CNRS, UMR 7349, F\'{e}d\'{e}ration Denis
  Poisson, FR 2964, Universit\'{e} d'Orl\'{e}ans, B.P. 6759, 45067
  Orl\'{e}ans cedex 2, France.}
\email{romain.abraham@univ-orleans.fr}
\author{Aymen Bouaziz}
\address{Aymen Bouaziz, Institut pr\'{e}paratoire aux \'{e}tudes scientifiques et
  techniques, 2070 La Marsa,  Tunisie.}
\email{bouazizaymen16@yahoo.com}
\author{Jean-Fran\c cois Delmas} 
\address{Jean-Fran\c cois Delmas, CERMICS, Ecole des Ponts,
  UPE, Champs-sur-Marne, France.}
\email{delmas@cermics.enpc.fr}
\begin{document}

\begin{abstract}
  We consider a  marking procedure of the vertices of  a tree where each
  vertex is marked independently from the others with a probability that
  depends only on its out-degree. We prove that a critical Galton-Watson
  tree conditioned on having a large number of marked vertices converges
  in distribution to the associated size-biased tree. We then apply this
  result to give  the limit in distribution of a  critical Galton-Watson
  tree conditioned on having a large number of protected nodes.
\end{abstract}

\keywords{Galton-Watson tree; random trees; local
  limits; protected nodes}

\subjclass[2010]{60J80, 60B10, 05C05}

\date{\today}
 
\maketitle

\section{Introduction}

In \cite{k86}, Kesten proved that a critical or sub-critical
Galton-Watson (GW) tree conditioned on reaching at least height $h$
converges in distribution (for the local topology on trees) as
$h$ goes to infinity toward the
so-called sized-biased tree (that we call here Kesten's tree and whose
distribution is described in Section \ref{sec:Kesten}). Since then,
other conditionings have been considered, see \cite{ad14,ad15,h15} and
the references therein for recent developments on the subject.

A protected node is a node that is not a leaf and none of its offsprings
is a  leaf. Precise asymptotics for  the number of protected  nodes in a
conditioned GW  tree have already  been obtained in  \cite{dj14,j15} for
instance. Let  $A(\bt)$ be  the number  of protected  nodes in  the tree
$\bt$.  Remark that this functional $A$  is clearly monotone in the sense of
\cite{h15} (using  for instance \reff{eq:additivity});  therefore, using
Theorem 2.1  of \cite{h15}, we immediately  get that a critical  GW tree
$\tau$ conditioned  on $\{A(\tau)>n\}$ converges in  distribution toward
Kesten's tree as $n$ goes  to infinity.  Conditioning on $\{A(\tau)=n\}$
needs extra  work and is  the main objective  of this paper.   Using the
general result of \cite{ad14}, if we have the following limit
\begin{equation}
\label{eq:lim}
\lim_{n\to+\infty}\frac{\P(A(\tau)=n+1)}{\P(A(\tau)=n)}=1,
\end{equation}
then  the  critical  GW  tree  $\tau$  conditioned  on  $\{A(\tau)=n\}$  converges  in
distribution also toward Kesten's tree,  see   Theorem
\ref{thm:protected}.

In fact, the limit \reff{eq:lim} can be seen as a special case of a more
general problem: conditionally given the tree,  we mark the nodes of the
tree  independently of  the rest  of the  tree with  a probability  that
depends only on the number of offsprings  of the nodes. Then we prove 
that a critical GW tree  conditioned on the total number of marked nodes
being large converges in  distribution toward
Kesten's tree, see Theorem  \ref{thm:main}.

The paper is then organized as follows: we first recall briefly the
framework of discrete trees, then we consider in Section
\ref{sec:marked} the problem of a marked GW tree and the
proofs of the results are given in Section \ref{sec:proof}. In
particular, we prove the limit \reff{eq:lim} when $A$ is the number of
marked nodes in Lemma \ref{lem:ratio} and we deduce the convergence of
a critical GW tree conditioned on the number of marked
nodes toward Kesten's tree in Theorem \ref{thm:main}. We finally
explain in Section \ref{sec:protected} how the problem of protected
nodes can be viewed as a problem on marked nodes and deduce the
convergence in distribution of a critical GW tree
conditioned on the number of protected nodes toward Kesten's tree in
Theorem \ref{thm:protected}.

\section{Technical background on GW trees}

\subsection{The set of discrete trees}\

We denote by $\N=\{0,1,2,\ldots\}$ the set of non-negative integers
and by
$\N^{*}=\{1,2,\ldots\}$ the set of positive integers.

If $E$ is a subset  of $\mathbb{N}^{*}$,  we call the span of $E$ the 
greatest common divisor of $E$. If $X$ is an  integer-valued   random 
variable, we call the span of $X$ the span of $\{n>0;\ \P(X=n)>0\}$.

We recall Neveu's formalism \cite{n86} for ordered rooted trees. Let
$\mathcal{U}=\bigcup_{n\geq 0}(\mathbb{N}^{*})^{n}$ 
be the set of finite sequences  of positive integers with the convention
$(\mathbb{N}^{*})^{0}=\{\emptyset\}$.  For $u\in \mathcal{U}$, its
length or generation $|u|\in \N$ is defined by 
$u  \in(\mathbb{N}^{*})^{|u|}$.   If $u$  and  $v$  are two  sequences  of
$\mathcal{U}$, we denote by $uv$ the concatenation of the two sequences,
with  the  convention  that  $uv=u$   if  $v=\emptyset$  and  $uv=v$  if
$u=\emptyset$.  The set of ancestors of $u$ is the set
\[
\anc(u)=\{v \in \mathcal{U};\ \exists w \in \mathcal{U} \text{ such that
} u=vw\}.
\]
Notice that $u$ belongs to $\anc(u)$. For two distinct
elements $u$ and $ v$ of $ \mathcal{U}$, we denote  by $u<v$ the lexicographic  order on
$\mathcal{U}$ i.e.  $u<v$  if $u \in \anc(v)$ and $u\neq v$ or if  $u=wiu'$ and $v=wjv'$
for some $i,j \in \mathbb{N}^{*}$ with $i<j$. We write $u\leq v$ if
$u=v$ or $u<v$. 

A tree $\bt$ is a subset of $\mathcal{U}$ that satisfies: 
\begin{itemize}
\item  $\emptyset \in \bt$.
\item  If $u\in \bt$, then $\anc(u)\subset \bt$.
\item  For every $u\in \bt$, there exists 
  $k_{u}(\bt)\in\N$ such that, for every $i\in\N^*$, $ui \in \bt$ iff
  $1\leq i\leq k_{u}(\bt)$.
\end{itemize}

The vertex $\emptyset$ is called  the root of $\bt$.
The integer $k_{u}(\bt)$ represents the number of offsprings of the
vertex $u \in \bt$. The set of children of a vertex $u\in \bt$ is given
by:
\begin{equation}
   \label{eq:children}
C_u(\bt)= \{ui ;\  1\leq i\leq k_u(\bt)\}.
\end{equation}
By convention, we set $k_u(\bt)=-1$ if $u \not\in
\bt$. 

A vertex $u \in  \bt$ is called a leaf if  $k_{u}(\bt)=0$.  We denote by
$\mathcal{L}_0(\bt)$ the set of leaves of $\bt$. A vertex $u \in \bt$ is
called  a   {\sl  protected   node}  if  $C_u(\bt)\neq   \emptyset$  and
$C_u(\bt)\bigcap \cl_0(\bt)=\emptyset$,  that is $u$  is not a  leaf and
none of its  children is a leaf. For $u\in \bt$, we  define
$F_{u}(\bt)$, the fringe subtree  of $\bt$ above $u$, as
$$F_{u}(\bt)=\{v \in \bt;\  u \in \anc(v)\}=\{uv;\  v \in S_{u}(\bt)\}$$
with $S_{u}(\bt)=\{v \in \mathcal{U}; uv \in \bt\}$.

Notice that $ S_{u}(\bt)$ is a tree. We  denote  by   $\mathbb{T}$ the set of
trees and by
$\mathbb{T}_{0}=\{\bt  \in  \mathbb{T};\  \mathrm{Card}(\bt)<+\infty \}$
the subset of finite trees.

We say that a sequence of  trees $(\bt_n,\  n\in \N)$ converges locally to
a tree $\bt$  if and  only if  $\lim_{n\rightarrow \infty  } k_u(\bt_n)
=k_u(\bt)$  for all  $u\in \cu$.  Let  $(T_{n},\ n  \in \N)$  and $T$  be
$\mathbb{T}$-valued  random  variables.   We denote  by  $\dist(T)$  the
distribution of the random variable $T$ and
write 
\[
\lim_{n\longrightarrow +\infty}\dist(T_{n})= \dist(T)
\]
for the convergence in distribution of  the sequence $(T_{n},\  n \in \N)$
to $T$ with respect to the local topology.

If $\bt,\bt' \in \mathbb{T}$ and $x \in \mathcal{L}_{0}(\bt)$ we
denote by
\begin{equation}\label{eq:graft}
\bt\circledast_x\bt'=\{u \in \bt\}\cup\{xv;\ v\in \bt'\}
\end{equation}
the tree obtained by grafting the tree $\bt'$ on the leaf $x$ of the tree $\bt$.
For every $\bt \in \mathbb{T}$ and every $x \in \mathcal{L}_0(\bt)$, we shall
consider the set of trees obtained by grafting a tree on the leaf $x$ of $\bt$:
$$\mathbb{T}(\bt,x)=\{\bt\circledast_x\bt';\  \bt' \in \mathbb{T}\}.$$

\subsection{Galton Watson trees}

Let  $p=(p(n),\ n  \in  \mathbb{N})$ be  a  probability distribution  on
$\N$. We assume that
\begin{equation}\label{eq:assumption}
    p(0)>0 ,\ p(0)+p(1)<1 ,\mbox{ and } \mu :=\sum_{n=0}^{+\infty}np(n) <+\infty.
\end{equation}

A $\mathbb{T}$-valued random variable $\tau$ is a GW tree with offspring 
distribution $p$ if the distribution of $k_{\emptyset}(\tau)$ is $p$ and 
it enjoys the branching property: for $n \in \mathbb{N}^{*}$, conditionally 
on $\{k_{\emptyset}(\tau)=n\}$, the subtrees $(F_{1}(\tau),\ldots,F_{n}(\tau))$ 
are independent and distributed as the original tree $\tau$. 

The  GW  tree  and  the   offspring  distribution  are  called  critical 
(resp. sub-critical,  super-critical) if $\mu  =1$ (resp. $\mu  <1,\ \mu >1$).

\section{Conditioning on the number of marked vertices}
\label{sec:marked}

\subsection{Definition of the marking procedure}

We begin with a fixed tree $\bt$. We add marks on the vertices of $\bt$ in an independent way such that
the probability of adding a mark on a node $u$ depends only on the number of
children of $u$. More precisely, we consider a mark function
$q:\mathbb{N}\longrightarrow [0,1]$ and a family of independent
Bernoulli random
variables $(Z_u(\bt),\ u\in\bt)$ such that for all $u\in \bt$:
\[
 \P(Z_u(\bt)=1)=1 - \P(Z_u(\bt)=0)=q(k_u(\bt)).
\]

The  vertex $u$  is said  to  have a mark  if $Z_u(\bt)=1$.  We denote  by
$\cm(\bt)=\{u\in \bt; \, Z_u(\bt)=1\}$ the set of marked vertices and by
$M(\bt)$ its cardinal. We call $(\bt, \cm(\bt))$ a marked tree.

A marked GW tree with offspring  distribution $p$ and mark function $q$
is a couple  $(\tau, \cm(\tau))$, with $\tau$ a GW  tree with offspring
distribution $p$ and  conditionally on $\{\tau=\bt\}$ the  set of marked
vertices $\cm(\tau)$ is distributed as $\cm(\bt)$.

\begin{remark}\label{rem:qdeterministe}
Notice that for $\ca\subseteq \mathbb{N}$, if we set
$q(k)=\ind_{\{k\in\ca\}}$, then 
the set $\mathcal{M}(\bt)$ is just the set of vertices with
out-degree (i.e. number of offsprings) in $\ca$ considered in
\cite{ad14,r15}. Hence, the above construction can be seen as an
extension of this case.
\end{remark}

\subsection{Kesten's tree}
\label{sec:Kesten}

Let $p$ be an offspring distribution satisfying Assumption \reff{eq:assumption} with
$\mu \leq 1$ (i.e. the associated GW process is critical or
sub-critical). We denote by
$ p^{*}=(p^{*}(n)=np(n)/\mu,\ n\in \mathbb{N})$
the corresponding size-biased distribution.

We define an infinite random tree $\tau ^{*}$ (the size-biased tree
that we call Kesten's tree in this paper) whose distribution is
described as
follows: 

There exists a unique infinite sequence $(v_{k},\ k \in \mathbb{N}^{*})$
of positive integers such that, for every $h \in
\mathbb{N},\ v_{1}\cdots v_{h} \in \tau^{*}$, with the convention that
$v_{1}\cdots v_{h}=\emptyset$ if $h=0$. The joint distribution of
$(v_{k},\ k \in \mathbb{N}^{*})$ and $\tau^{*}$ is determined recursively as follows.
For each $h \in \mathbb{N}$, conditionally given $(v_{1},\ldots,v_{h})$
and $\{u\in \tau^*;\, |u|\leq h\}$ the tree $\tau^*$ up to level $h$, we have:
\begin{itemize}
\item  The number of children $(k_{u}(\tau^{*}),\ u \in
    \tau^{*},\ |u|=h)$ are independent and distributed according to $p$
    if $u\neq v_{1}\cdots v_{h}$ and according to $p^{*}$ if
    $u=v_{1}\ldots v_{h}$.
  \item Given  $\{u\in \tau^*;\, |u|\leq h+1\}$ and  $(v_1, \ldots, v_h)$,  the integer
    $v_{h+1}$  is   uniformly  distributed   on  the  set   of  integers
    $\{1,\ldots,k_{v_{1}\cdots v_{h}}(\tau^{*})\}$.
\end{itemize}

\begin{remark}\

  Notice   that   by  construction,   a.s.   $\tau^{*}   $  has a
  unique infinite spine.  And following  Kesten \cite{k86},  the random  tree
  $\tau^{*}$  can  be viewed  as  the  tree  $\tau$ conditioned  on  non
  extinction.
 
For $\bt \in \mathbb{T}_{0}$ and $x \in \mathcal{L}_0(\bt)$, we have:
\[
\P(\tau^{*}\in \mathbb{T}(t,x))=\frac{\P(\tau=t)}{\mu^{\vert x
    \vert}p(0)}\cdot
\]
\end{remark}

\subsection{Main theorem}

\begin{theorem}\label{thm:main}
  Let $p$ be a critical offspring distribution that satisfies Assumption
  \reff{eq:assumption}.   Let $(\tau,  \cm(\tau))$ be  a marked  GW tree
  with  offspring  distribution $p$  and  mark  function $q$  such  that
  $p(k)q(k)>0$ for some  $k\in \N$.  For every  $n\in\N^*$, let $\tau_n$
  be a tree whose distribution is the conditional distribution of $\tau$
  given  $\{M(\tau)=n\}$.  Let  $\tau^*$ be  a Kesten's  tree associated
  with $p$. Then we have:
\[
\lim_{n\to+\infty}\dist(\tau_n)=\dist(\tau^*),
\]
where  the limit  has to  be understood  along a  subsequence for  which
$\P(M(\tau)=n)>0$.
\end{theorem}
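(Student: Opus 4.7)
The plan is to reduce Theorem \ref{thm:main} to the ratio limit
\begin{equation*}
\lim_{n\to\infty}\frac{\P(M(\tau)=n+1)}{\P(M(\tau)=n)}=1,
\end{equation*}
taken along the support of $M(\tau)$, and then invoke the general conditioning result of \cite{ad14} already alluded to in the introduction. The mark functional $M$ is built from marks placed independently on the vertices of $\tau$, conditionally on $\tau$ and using only the local data $k_u(\tau)$, so it fits directly into the scope of \cite{ad14}: once the ratio limit is established, the convergence toward Kesten's tree $\tau^{*}$ is immediate. Thus the entire content of the proof is the ratio limit, which I would isolate as Lemma \ref{lem:ratio}.

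My first attempt at the ratio limit would use the probability generating function $f(s)=\E[s^{M(\tau)}]$. Conditioning on $k_\emptyset(\tau)$ and on the Bernoulli mark at the root, the branching property of $\tau$ yields the fixed-point equation $f(s)=h(s,f(s))$ with
\begin{equation*}
h(s,r)=\sum_{k\geq 0}p(k)\bigl[q(k)s+1-q(k)\bigr]r^{k}.
\end{equation*}
Criticality gives $\partial_{r}h(1,1)=\mu=1$, and the hypothesis that $p(k)q(k)>0$ for some $k$ gives $\alpha:=\partial_{s}h(1,1)=\sum_{k}p(k)q(k)>0$. Solving $r=h(s,r)$ near $(1,1)$ therefore amounts to inverting a tangency: under a finite-variance assumption $\sigma^{2}=g''(1)<\infty$, a Taylor expansion gives $1-f(s)\sim\sqrt{2\alpha(1-s)/\sigma^{2}}$ as $s\uparrow 1$, and a standard transfer theorem then yields $\P(M(\tau)=n)\sim c\,n^{-3/2}$, from which the ratio limit follows.

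The main obstacle is that Assumption \reff{eq:assumption} only requires $\mu=1$, so we must handle the possibly infinite-variance regime without any stable-index assumption. A robust route is via the Otter--Dwass cycle lemma:
\begin{equation*}
\P(M(\tau)=m)=\sum_{n\geq 1}\frac{1}{n}\,\P(S_{n}=-1,\,V_{n}=m),
\end{equation*}
where $(S_{n},V_{n})$ is the bivariate random walk with step $(X-1,B)$, $X\sim p$ and $B\mid X\sim\mathrm{Bernoulli}(q(X))$. Since $\E[B]=\alpha>0$, the typical contributing $n$ is of order $m/\alpha$, and the regenerative structure of $V$ at successive mark-creation times reduces the problem to the classical ratio limit for first-passage times $\P(|\tau|=n+1)/\P(|\tau|=n)\to 1$, known to hold for any critical offspring law with $p(0)>0$. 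Combining Lemma \ref{lem:ratio} with \cite{ad14} then produces Theorem \ref{thm:main}.
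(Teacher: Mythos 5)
Your overall architecture coincides with the paper's: reduce everything to the ratio limit for $\P(M(\tau)=n)$ and feed it into the grafting criterion of \cite{ad14} (the paper's Proposition \ref{prop:ratio}). Two caveats already at this stage: the cited result of \cite{ad14} is for deterministic functionals, so it must be re-proved for the randomized functional $M$ (the paper does this using $M(\bt)\le \Card(\bt)$ and the fact that $M(\bt\circledast_x\bt')$ is distributed as $\hat M(\bt')+M(\bt)-\ind_{\{Z_x(\bt)=1\}}$ with $\hat M(\bt')$ independent of $\cm(\bt)$ --- you assert the fit "is immediate" without checking this additivity); and since no aperiodicity is assumed (e.g.\ $q=\ind_{\ca}$ with $0\in\ca$ gives $\gamma=\P(M(\tau)>0)=1$ and genuine periodicity can occur), the ratio limit must be formulated over blocks $[n,n+d)$ where $d$ is the span of $M(\tau)-1$, which is exactly why the theorem is stated along a subsequence. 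You correctly discard the generating-function route, which needs finite variance. Your fallback --- cycle lemma plus regeneration of the depth-first walk at mark times --- is in spirit the same mechanism the paper implements via Rizzolo's map $\phi$: conditioned on $\{M(\tau)>0\}$, the number of marks becomes the total size of a new GW tree, and one concludes by the known ratio limit for total progeny of a critical GW tree.

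The genuine gap is the sentence claiming that the regenerative structure "reduces the problem to the classical ratio limit for first-passage times." That reduction is the entire content of the proof and it is not routine, for two reasons. First, the walk does not regenerate cleanly at mark times: between consecutive marks, the depth-first walk traverses whole pending subtrees that may contain no mark at all, so the induced step (offspring) law is not simply that of $1+\sum_{i=1}^{N}(X_i-1)$; one must condition on $\{N\le G\}$ (the first mark occurring before the tree is exhausted) and then thin binomially with parameter $\gamma$ to discard markless subtrees --- this produces the paper's offspring variable $Y$. Second, and crucially, one must prove that this induced law is critical, i.e.\ $\E[Y]=\gamma\,\E[\tilde X]=1$; the classical first-passage ratio limit you invoke holds only for mean-zero (critical) walks, and criticality of the induced tree is not inherited automatically from $\mu=1$. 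The paper needs a separate argument for this (Lemma \ref{lem:critique}, two applications of Wald's identity together with the strong Markov property at $G$). Without identifying $Y$ and verifying $\E[Y]=1$, the final step of your argument does not go through.
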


\begin{remark}
If for every $k\in\N$, $0<q(k)<1$, then
$\P(M(\tau))=n)>0$ for every $n\in\N$, hence the above
conditioning is always valid.
\end{remark}

\section{Proof of Theorem \ref{thm:main}}\label{sec:proof}
Set  $\gamma=\P(M(\tau)>0)$.    Since  there   exists  $k\in   \N$  with
$p(k)q(k)>0$, we  have $\gamma>0$.    A sufficient  condition (but
not necessary) to  have $\P(M(\tau)=n)>0$ for every $n$  large enough is
to  assume  that  $\gamma<1$  (see  Lemma  \ref{lem:LY=ok}  and  Section
\ref{sec:p-ratio}).  Taking $q=\ind_\ca$, see Remark
\ref{rem:qdeterministe}   for   $0 \in \ca \subset \N$   implies
$\gamma=1$ and some periodicity may occur.

The following result is the analogue in the random case of Theorem 3.1
in \cite{ad14} and its proof is in fact a straighforward adaptation of
the proof in \cite{ad14} by using: 
\begin{itemize}
\item [(i)] $M(\bt)\leq  \Card   (\bt)$.
\item[(ii)] For  every
$\bt\in\T_0$,  $x\in\mathcal{L}_0(\bt)$  and $\bt'\in\T$, we
have   that   $M(\bt\circledast_x\bt')$    is   distributed   as   $\hat
M(\bt')+M(\bt)-\ind_{\{Z_x(\bt)=1\}}$,    where   $\hat    M(\bt')$   is
distributed as $M(\bt')$ and is  independent of $\cm(\bt)$.
\end{itemize}

\begin{proposition}\label{prop:ratio}
Let $n_{0} \in \mathbb{N}\cup\{\infty\}$. Assume that $\P(M(\tau)\in [n,n+n_{0}))>0$ for $n$ large enough. Then, if 
\begin{equation}\label{eq:limit}
\lim_{n\longrightarrow+\infty} \frac{\P(M(\tau)\in
  [n+1,n+1+n_{0})}{\P(M(\tau) \in [n,n+n_{0}))}=1,
\end{equation}
we have: 
$$\lim_{n\longrightarrow+\infty} dist(\tau \vert M(\tau) \in [n,n+n_{0}))=dist(\tau^{*}).$$
\end{proposition}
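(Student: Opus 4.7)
The strategy mirrors the proof of Theorem~3.1 of \cite{ad14}. The plan is to show, for every finite tree $\bt\in \T_0$ and every leaf $x\in \mathcal{L}_0(\bt)$, the cylinder convergence
$$\lim_{n\to+\infty}\P\bigl(\tau\in \T(\bt,x)\,\big|\,M(\tau)\in [n,n+n_0)\bigr)=\frac{\P(\tau=\bt)}{p(0)}.$$
Since $\mu=1$, this matches $\P(\tau^*\in \T(\bt,x))$ by the formula recalled at the end of Section~\ref{sec:Kesten}. The standard argument used in \cite{ad14} then promotes this family of cylinder convergences to the local convergence $\dist(\tau_n)\to \dist(\tau^*)$, since the events $\T(\bt,x)$ form a sufficient class of test sets when the limit is concentrated on trees with a unique infinite spine.

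For the key computation, decompose any $\tau\in \T(\bt,x)$ uniquely as $\tau=\bt\circledast_x\bt'$ with $\bt'\in \T$. The branching property yields $\P(\tau=\bt\circledast_x\bt')=\P(\tau=\bt)\,\P(\tau=\bt')/p(0)$, and property (ii) combined with the independence of the marks on $\bt\setminus\{x\}$ from both $Z_x(\bt)$ and the fresh marks $\hat M(\bt')$ gives, after summation over $\bt'\in \T$,
$$\P\bigl(\tau\in\T(\bt,x),\,M(\tau)\in[n,n+n_0)\bigr)=\frac{\P(\tau=\bt)}{p(0)}\sum_{k=0}^{K}\P(Y=k)\,\P\bigl(M(\tau)\in[n-k,n-k+n_0)\bigr),$$
where $Y=M(\bt)-\ind_{\{Z_x(\bt)=1\}}$ is the number of marks on $\bt\setminus\{x\}$ and $K:=\Card(\bt)-1\geq Y$ by property (i).

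Dividing by $\P(M(\tau)\in[n,n+n_0))$, the conditional cylinder probability reads $\P(\tau=\bt)/p(0)$ times the finite weighted sum $\sum_{k=0}^{K}\P(Y=k)\,R_n^k$, where $R_n^k:=\P(M(\tau)\in[n-k,n-k+n_0))/\P(M(\tau)\in[n,n+n_0))$. Assumption \reff{eq:limit} is exactly the statement $R_n^{-1}\to 1$; iterating it and inverting (both legitimate since the limits equal $1$ and the denominators are eventually positive by hypothesis) yields $R_n^k\to 1$ for every fixed $k\in\{0,1,\ldots,K\}$. The sum being finite, one passes to the limit term by term and uses $\sum_{k=0}^{K}\P(Y=k)=1$ to obtain the announced cylinder convergence.

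The only non-routine step is the interchange of limit and expectation over $\bt'$, and this is exactly where property (i) enters: the bound $Y\leq K$ reduces the expectation to a finite sum, each of whose ratios is controlled individually by \reff{eq:limit}. Without boundedness of $Y$ one would have to control the tail of the family $(R_n^k)_k$ uniformly in $k$, which would require considerably more work; the upgrade from cylinder convergence to local convergence, on the other hand, requires no new idea beyond the mechanism in \cite{ad14}.
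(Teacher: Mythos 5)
Your proof is correct and follows essentially the same route as the paper's: the same decomposition over $\bt'$ via $\tau=\bt\circledast_x\bt'$, the same bounded shift $D(\bt,x)=M(\bt)-\ind_{\{Z_x(\bt)=1\}}\le \Card(\bt)-1$ from properties (i)--(ii), and the same term-by-term limit in the resulting finite sum using \reff{eq:limit}. The only point you leave implicit is the second half of the characterization of local convergence in Lemma~2.1 of \cite{ad14}, namely $\P(\tau=\bt\mid M(\tau)\in[n,n+n_0))\to 0$ for every finite $\bt$, which is immediate here since $M(\bt)\le \Card(\bt)$ makes this conditional probability vanish for $n\ge \Card(\bt)$.
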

 
\begin{proof}
According to Lemma 2.1 in \cite{ad14}, a sequence $(T_n, \,n\in \N)$ of
finite random trees converges in distribution (with respect to the
local topology) to some Kesten's tree $\tau^*$ if and only if, for
every finite tree $\bt\in \mathbb{T}_{0}$ and every leaf $x \in
\mathcal{L}_0(\bt)$,
\begin{equation}\label{eq:conv}
\lim_{n\to+\infty}\P\bigl((T_n\in
\mathbb{T}(\bt,x)\bigr)=\P\bigl(\tau^*\in
\mathbb{T}(\bt,x)\bigr)\quad\mbox{and}\quad
\lim_{n\to+\infty}\P(T_n=\bt)=0.
\end{equation}

Let $\bt \in \mathbb{T}_{0}$ and $x \in \mathcal{L}_0(\bt)$.
We set $D(\bt,x)=M(\bt)-\ind_{\{Z_x(\bt)=1\}}$. Notice that
$D(\bt,x)\leq \Card(\bt)-1$. Elementary computations give for every
$\bt'\in\T_0$ that:
$$\P(\tau=\bt\circledast
\bt')=\frac{1}{p(0)}\P(\tau=\bt)\P(\tau=\bt')\quad \mbox{and}\quad
\P(\tau^*\in\mathbb{T}(\bt,x))=\frac{1}{p(0)}\P(\tau=\bt).$$
As $\tau$ is a.s. finite, we have:
\begin{multline*}
   \P  (\tau \in \mathbb{T}(\bt,x), M(\tau)\in [n,n+n_{0}))\\
\begin{aligned}
&=\sum_{\bt'\in\mathbb{T}_0}\P(\tau=\bt\circledast_x\bt',M(\tau)\in[n,n+n_0))\\
& =\sum_{\bt'\in\mathbb{T}_0}\P(\tau=\bt\circledast_x\bt')\P(M(\bt\circledast_x\bt')\in[n,n+n_0))\\
&=\sum_{\bt' \in \mathbb{T}_{0}}\frac{\P(\tau=\bt)
  \P(\tau=\bt')}{p(0)}\P(\hat M(\bt')+D(\bt,x)\in [n,n+n_{0}))\\
&=\P(\tau^{*} \in \mathbb{T}(\bt,x))\, \P(\hat M(\tau)+D(\bt,x)\in
  [n,n+n_{0})).
\end{aligned}
\end{multline*}
Notice that:
\begin{multline*}
\P(\hat M(\tau)+D(\bt,x)\in
  [n,n+n_{0})) \\
\begin{aligned}
&=\sum_{k=0}^{\Card(\bt)-1}\P(\hat M(\tau)+D(\bt,x)\in
                 [n,n+n_{0}) \mid D(\bt,x)=k)\, \P(D(\bt,x)=k)\\ 
&=\sum_{k=0}^{\Card(\bt)-1}\P(M(\tau)\in [n-k,n+n_{0}-k))\, \P(D(\bt,x)=k).  
\end{aligned}
\end{multline*}
Then we obtain using Assumption \reff{eq:limit} that:
\[
\lim_{n\longrightarrow +\infty}
\frac{\P(\hat M(\tau)+D(\bt,x)\in
  [n,n+n_{0})) }{\P( M(\tau)\in
  [n,n+n_{0})) }=1,
\]
that is
$$\lim_{n\longrightarrow +\infty}\P(\tau \in \mathbb{T}(\bt,x)\mid
M(\tau)\in [n,n+n_{0}))=\P(\tau^{*} \in \mathbb{T}(\bt,x)).$$
This proves the first limit of \reff{eq:conv}.

The second limit is immediate since, for every $n\ge \Card (\bt)$,
$$\P(\tau=\bt\mid M(\tau)\in[n,n+n_0))=0.$$
\end{proof}

The main ingredient for the proof of Theorem \ref{thm:main} is then the
following lemma.

\begin{lemma}\label{lem:ratio}
Let $d$ be the span of the random variable $M(\tau)-1$. We have
\begin{equation}
   \label{eq:limrM}
\lim_{n\rightarrow+\infty } \frac{\P(M(\tau)\in
  [n+1,n+1+d))}{\P(M(\tau)\in [n,n+d))}=1.
\end{equation}
\end{lemma}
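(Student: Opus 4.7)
My plan is to prove (\ref{eq:limrM}) by establishing a regularly-varying asymptotic for $\P(M(\tau) = m)$ along the support lattice $\{1 + kd : k \in \N\}$, from which the ratio limit follows at once. I would work through the generating function $\phi(s) := \E[s^{M(\tau)}]$. Conditioning on $(k_\emptyset(\tau), Z_\emptyset(\tau))$ and using the branching property, $\phi$ satisfies the implicit functional equation $\phi(s) = G(s, \phi(s))$, where
\[
 G(s, u) := \sum_{k \geq 0} p(k)\,[1 - q(k) + q(k) s]\, u^k.
\]
Since $p$ is critical and $p(0) > 0$, the tree $\tau$ is a.s.\ finite, so $\phi(1) = 1$; moreover $\partial_u G(1,1) = \mu = 1$ (criticality) and $\partial_s G(1,1) = \alpha := \sum_k p(k) q(k) > 0$ (by the hypothesis that $p(k)q(k) > 0$ for some $k$). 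Thus $(s,u) = (1,1)$ is a branch-point singularity of the implicit curve $\{u = G(s, u)\}$.

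Under finite offspring variance $\sigma^2 := \sum_k p(k) k(k-1) < \infty$, a second-order Taylor expansion of $u - G(s, u)$ at $(1,1)$ yields
\[
 \phi(s) - 1 \;=\; -\sqrt{\frac{2\alpha}{\sigma^2}\,(1-s)}\,\bigl(1 + o(1)\bigr) \qquad \text{as } s \uparrow 1,
\]
and the Flajolet--Odlyzko transfer theorem, applied after verifying an analytic continuation of $\phi$ in a slit neighbourhood of each $d$-th root of unity on $|s| = 1$, gives
\[
 \P(M(\tau) = m) \;\sim\; C\, m^{-3/2} \qquad \text{along } m \in 1 + d\N,
\]
for an explicit constant $C > 0$. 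For $n$ large the window $[n, n+d)$ contains exactly one element of this support, so $\P(M(\tau) \in [n, n+d)) \sim C\, n^{-3/2}$ and the ratio (\ref{eq:limrM}) tends to $1$.

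The main obstacle will be the absence of a finite-variance hypothesis in Assumption (\ref{eq:assumption}). If $p$ has infinite variance, the quadratic expansion of $G$ at $(1,1)$ breaks down, and the singularity of $\phi$ is instead of the form $\phi(s) - 1 \sim -(1-s)^{1/\rho}\, L\bigl((1-s)^{-1}\bigr)$ for some $\rho \in (1, 2]$ and slowly-varying $L$ dictated by the regular-variation index of the tail of $p$. The transfer theorem still produces a regularly-varying asymptotic for $\P(M(\tau) = m)$ with exponent in $(-2, -3/2]$, and regular variation forces the ratio limit (\ref{eq:limrM}) nonetheless. A secondary technical point is the analytic continuation of $\phi$ across $|s| = 1$: one must rule out zeros of $1 - \partial_u G(s, \phi(s))$ on the unit circle other than those forced by the span $d$, which follows from a standard characteristic-function argument using criticality. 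An entirely parallel route, which I would keep in reserve, goes through the Dwass cycle-lemma identity $\P(M(\tau) = m) = \sum_{n \ge 1} n^{-1}\, \P(S_n = -1,\, W_n = m)$ for the two-dimensional walk $(S_n, W_n) := \sum_{i=1}^n (X_i - 1, Z_i)$ (with $(X_i, Z_i)$ i.i.d.\ like $(k_\emptyset(\tau), Z_\emptyset(\tau))$) combined with a joint local (possibly stable) limit theorem, which isolates the same $m^{-3/2}$ behaviour via an $\sqrt{m}$-window around $n = m/\alpha$.
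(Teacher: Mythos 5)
Your functional equation $\phi(s)=G(s,\phi(s))$ and the identification of $(1,1)$ as a branch point are correct, but the analytic route you build on it does not go through under the paper's hypotheses, and this is a genuine gap rather than a technicality. Assumption \reff{eq:assumption} requires only $p(0)>0$, $p(0)+p(1)<1$ and a finite \emph{first} moment: there is no finite-variance hypothesis and, crucially, no assumption that the tail of $p$ is regularly varying. Your fallback for the infinite-variance case presumes a singularity expansion $\phi(s)-1\sim -(1-s)^{1/\rho}L\bigl((1-s)^{-1}\bigr)$, which is available only when $p$ lies in the domain of attraction of a stable law; a critical $p$ with infinite variance need not be. The same objection applies to your reserve route: the cycle-lemma identity $\P(M(\tau)=m)=\sum_{n\ge 1} n^{-1}\P(S_n=-1,\,W_n=m)$ is correct, but a \emph{joint local limit theorem} again needs domain-of-attraction hypotheses that the paper does not make. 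Moreover, even in the finite-variance case the Flajolet--Odlyzko transfer requires analytic continuation of $\phi$ to a $\Delta$-domain beyond $|s|=1$, which need not hold when the generating function of $p$ has radius of convergence exactly $1$; retreating to Tauberian theorems yields only averaged, not coefficient-wise, asymptotics. So your argument establishes the lemma only for a strict subclass of the offspring distributions covered by the statement.

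The paper avoids all of this by a combinatorial reduction: Rizzolo's map turns the marked tree $(\tau^0,\cm(\tau^0))$ (the tree conditioned to carry at least one mark) into a \emph{new} GW tree $\tilde\tau$ whose total progeny equals $M(\tau^0)$, and Lemmas \ref{lem:cardMt}, \ref{lem:LY=ok} and \ref{lem:critique} show that $\tilde\tau$ is again critical with an offspring law satisfying \reff{eq:assumption}. The ratio limit for the \emph{total progeny} of a critical GW tree is already known in exactly this generality (Proposition 4.3 of \cite{ad14}, which rests on the one-dimensional cycle lemma plus the strong ratio limit theorem for mean-zero random walks, a result needing no second moment and no regular variation). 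If you want to salvage a direct argument along your lines, the missing ingredient is precisely such a strong ratio limit theorem in place of a local limit theorem; but at that point the paper's reduction is both shorter and strictly more general.
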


\bigskip

The end of this section is  devoted to the proof of Lemma \ref{lem:ratio}, see
Section  \ref{sec:p-ratio}, which  follows  the ideas  of  the proof  of
Theorem 5.1 of \cite{ad14}.

\subsection{Transformation of a subset of a tree onto a tree}

We  recall  Rizzolo's  map  \cite{r15}  which from  $\bt\in  \T_0$  and  a
non-empty  subset  $A$  of  $\bt$   builds  a  tree  $\bt_A$  such  that
$\Card(A)=\Card (\bt_A)$.  We will give a recursive construction of this
map $\phi$:  $(\bt, A)\mapsto \bt_A=\phi(\bt,A)$.  We will  check in
the next  section that this  map is such  that if $\tau$  is a GW  tree then
$\tau_A$ will also be a GW tree for a well chosen subset $A$ of
$\tau$. Figure 1 below shows an example of a tree
$\bt$, a set $A$ and the associated tree $\bt_A$ which helps to
understand the construction.

\medskip

For a vertex $u \in \bt$, recall $C_u(\bt)$ is the set of children of $u$ in
$\bt$. We define for $u \in \bt$:
\[
R_u(\bt)=\bigcup _{w\in \anc(u)} \{v\in C_w(\bt); \, u<v\}
\]
the vertices of $\bt$ which are larger than $u$ for the lexicographic
order and are children of $u$ or of one of its ancestors. 
For a vertex $u \in \bt$, we shall consider $A_u$ the set of elements of $A$ in the fringe
subtree above $u$:
\begin{equation}
   \label{eq:A_u}
A_u=A \cap F_{u}(\bt)=A \cap \{uv ; v\in S_{u}(\bt)\}.
\end{equation}

Let $\bt\in  \T_0$ and  $A\subset \bt$ such  that $A\neq  \emptyset$. We
shall  define  $\bt_A=\phi(\bt, A)$  recursively.   Let  $u_{0}$ be  the
smallest  (for the  lexicographic order)  element of  $A$. Consider  the
fringe subtrees  of $\bt$ that are rooted  at the vertices in  $R_{u_0}(\bt)$ and
contain at least one vertex in $A$, that  is
$(F_u(\bt); u\in R_{u_0}^A(\bt))$, with
\[
R_{u_0}^A(\bt)=\left\{ u\in R_{u_0}(\bt); \, A_u\neq \emptyset \right\}=\left\{ u\in R_{u_0}(\bt); \, \exists v\in A  \text { such that } u \in    \anc(v)\right\}.
\]
Define the number of children of the root of
  tree $\bt_A$ as the number of those fringe subtrees:
\[
k_\emptyset(\bt_A)=
\Card (R_{u_0}^A(\bt)).
\]
If  $k_\emptyset(\bt_A)=0$  set  $\bt_A=\{\emptyset\}$. Otherwise  let  $u_1<
\ldots<   u_{k_\emptyset(\bt_A)}$    be   the   ordered    elements   of
$R_{u_0}^A(\bt)$ with respect to the lexicographic order on $\mathcal{U}
$.  And we define $\bt_A=\phi(\bt, A)$ recursively by:
\begin{equation}
   \label{eq:rec-tA}
F_i(\bt_A)=\phi \left(F_{u_i}(\bt), A_{u_i}\right) \quad  \text{for $1\leq i\leq
k_\emptyset(\bt_A)$.}
\end{equation}
Since $\Card(A_{u_i})<\Card(A)$, we deduce $\bt_A=\phi(\bt, A)$ is well
defined and it is a tree by construction. Furthermore, we clearly have
that $A$ and $\bt_A$ have the same cardinal:
\begin{equation}
   \label{eq:ta=a}
\Card(\bt_A)=\Card(A).
\end{equation}

\begin{center}
\begin{figure}[H]\label{fig:Rizzolo}
\includegraphics[width=14cm]{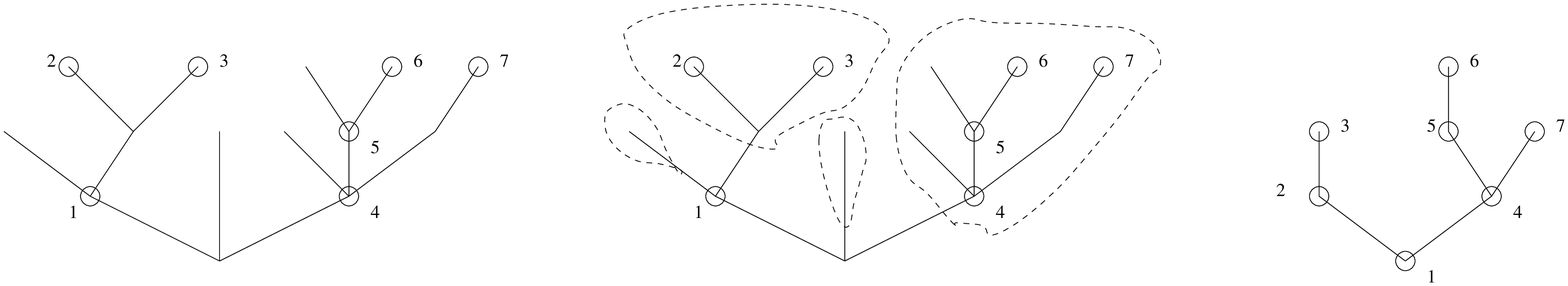}
\caption{Left: A tree $\bt$ and the set $A$.
Center: The fringe subtrees rooted at the vertices in $R_{u_0}(\bt)$.
Left: the tree $\bt_A$.
The labels have no signification, they only show which node of $\bt$
corresponds to a node of $\bt_A$}
\end{figure}
\end{center}

\subsection{Distribution of the number of marked nodes}

Let $(\tau, \cm(\tau))$  be a marked GW  tree with critical
offspring distribution $p$ satisfying \reff{eq:assumption} and mark function
$q$. Recall $\gamma=\P(M(\tau)>0)=\P(\cm(\tau)\neq \emptyset)$.

Let  $((X_{i}, Z_i), i\in \N^*)$  be i.i.d.  random variables  such that
$X_i$ is distributed according to $p$ and $Z_i$ is conditionally on
$X_i$ Bernoulli with parameter $q(X_i)$. We define:
\begin{itemize}
\item $G=\inf \{k\in \mathbb{N}^{*};\ \sum^{k}_{i=1}(X_{i}-1)=-1\}$.
\item $N=\inf \{k\in \mathbb{N}^{*};\ Z_{k}=1\}$.
\item $\tilde{X}$  a random variable  distributed as
  $1+\sum^{N}_{i=1}(X_{i}-1)$ conditionally on  $\{N\leq G\}$.
\item $Y$ a random variable which is conditionally on $\tilde X$
  binomial with parameter $(\tilde X, \gamma)$. 
\end{itemize}

We say that a probability distribution on  $\N$ is
aperiodic if  the span of its support  restricted to $\N^*$ is 1.  
The following result  is immediate as the distribution $p$
of $X_1$ satisfies \reff{eq:assumption}.
\begin{lem}
   \label{lem:LY=ok}
The distribution of $Y$ satisfies \reff{eq:assumption} and if $\gamma<1$
then it is aperiodic.
\end{lem}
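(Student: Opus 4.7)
The plan is to prove $\E[Y]=1$ and $\P(Y=0)>0$ as the two main inputs; the other assertions in Assumption~\reff{eq:assumption} and the aperiodicity statement will then drop out as short corollaries.

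The key preliminary observation is that, although $Z_i$ is defined to be $q(X_i)$-Bernoulli conditionally on $X_i$, its unconditional law is Bernoulli with parameter $\alpha:=\sum_{k\in\N}p(k)q(k)>0$, and the variables $Z_i$ are i.i.d.\ because the pairs $(X_i,Z_i)$ are. Hence $N$ is geometric with mean $1/\alpha<+\infty$, and $\tilde N:=N\wedge G$ is an $\mathcal{F}_k$-stopping time (with $\mathcal{F}_k=\sigma((X_i,Z_i),\,1\le i\le k)$) with $\E[\tilde N]\le\E[N]<+\infty$.

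The value of $\E[Y]$ is computed by applying Wald's identity to the random walk $S_k:=\sum_{i=1}^{k}(X_i-1)$ at time $\tilde N$. Criticality of $p$ gives $\E[X_1-1]=0$ and integrability of $X_1-1$, and $\E[\tilde N]<+\infty$, so $\E[S_{\tilde N}]=0$. Splitting this expectation according to $\{N\le G\}$ (on which $\tilde N=N$) and $\{N>G\}$ (on which $\tilde N=G$ and $S_G=-1$) yields $\E[S_N\ind_{\{N\le G\}}]=1-\gamma$, hence $\E[\tilde X]=(\gamma+(1-\gamma))/\gamma=1/\gamma$ and $\E[Y]=\gamma\E[\tilde X]=1$. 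For $\P(Y=0)>0$: when $\gamma<1$ this follows from $\P(Y=0)=\E[(1-\gamma)^{\tilde X}]>0$, since $\tilde X<+\infty$ a.s.; when $\gamma=1$, the event $\{\tau=\{\emptyset\}\}$ has probability $p(0)>0$ and must a.s.\ carry a mark, which forces $q(0)=1$, and then $\{X_1=0,Z_1=1\}$ has probability $p(0)>0$, produces $N=G=1$, and gives $\tilde X=Y=0$.

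The remaining properties are corollaries. Since $\E[Y]=1$ and $\P(Y=0)>0$, the support of $Y$ cannot be contained in $\{0,1\}$ (otherwise $\E[Y]=\P(Y=1)<1$), so $\P(Y\ge 2)>0$, i.e.\ $\P(Y=0)+\P(Y=1)<1$. For aperiodicity when $\gamma<1$, $\E[Y]=1>0$ gives $\P(Y\ge 1)>0$, and since $Y\le\tilde X$ this forces $\P(\tilde X\ge 1)>0$; then
\[
\P(Y=1)=\sum_{m\ge 1}m\gamma(1-\gamma)^{m-1}\,\P(\tilde X=m)>0,
\]
so $1$ belongs to the support of $Y$ in $\N^*$ and the span is $1$. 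The one genuinely delicate point is Wald's identity: the obvious candidate $G$ is the total progeny of a critical GW tree and has infinite expectation, so Wald cannot be applied at $G$ directly; the trick is to cap $G$ by the independent geometric time $N$, using $\tilde N=N\wedge G$, which restores integrability and lets the zero-mean increments $X_i-1$ do their work.
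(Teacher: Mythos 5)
Your proof is correct, but it is worth noting that the paper offers no argument at all for this lemma: it is dismissed with the single sentence that the result ``is immediate as the distribution $p$ of $X_1$ satisfies \reff{eq:assumption}.'' What you have actually done is front-load the computation $\E[Y]=1$, which is precisely the content of the paper's Lemma \ref{lem:critique} (criticality of $\tilde\tau$), proved later by a slightly different bookkeeping: the paper applies Wald at $N$ and then handles the term $\E[\sum_{i=1}^{N}(X_i-1)\ind_{\{N>G\}}]$ via the strong Markov property at $G$, whereas you apply Wald at the capped time $N\wedge G$ and split on $\{N\le G\}$ versus $\{N>G\}$ using $S_G=-1$; the two computations are equivalent and both correctly avoid the non-integrable time $G$. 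The payoff of your route is that $\P(Y=0)+\P(Y=1)<1$ then drops out of $\E[Y]=1$ together with $\P(Y=0)>0$, so the only verifications genuinely specific to this lemma are $\P(Y=0)>0$ (where your treatment of the case $\gamma=1$ via $q(0)=1$ is a nice point the paper glosses over) and $\P(Y=1)>0$ when $\gamma<1$, which gives aperiodicity. If one wanted only what the lemma literally asserts, $\E[Y]<+\infty$ follows more cheaply from $Y\le\tilde X\le 1+\sum_{i=1}^{N}X_i$ and Wald at the geometric time $N$, without computing the exact mean; but your stronger statement is needed anyway for Lemma \ref{lem:critique}, so nothing is wasted.
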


Recall that  for a tree $\bt\in \T_0$, we have:
\begin{equation}
   \label{eq:=tree}
\sum_{u\in \bt} (k_u(\bt) -1)=-1
\end{equation}
and  $\sum_{u\in \bt,  u<v} (k_u(\bt)  -1)>-1$ for  any $v\in  \bt$.  We
deduce that $G$  is distributed according to $\Card(\tau)$  and thus $N$
is  distributed like  the index  of the  first marked  vertex along  the
depth-first walk of $\tau$. Then, we have:
\begin{equation}
   \label{eq:N<G}
\gamma=\P(N\leq G). 
\end{equation}

We denote by $(\tau^0,\cm(\tau ^0))$ a random marked tree distributed as
$(\tau,              \cm(\tau))$             conditioned              on
$ \{\mathcal{M}(\tau)\neq \emptyset\}$. By construction, $\Card(\tau^0)$
is distributed as $G$ conditioned on $\{N \leq G \}$.

\begin{lem}
   \label{lem:cardMt}
   Under the hypothesis of this  section, we have that $\tau^0_{\cm(\tau
     ^0)}=\phi(\tau^0, \cm(\tau^0))$ is a critical GW tree with the law
   of $Y$ as offspring distribution.
\end{lem}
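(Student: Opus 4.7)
The plan is to use the depth-first walk encoding of $\tau$ together with the strong Markov property of the i.i.d.\ sequence $((X_i,Z_i),\,i\in\N^*)$, in order to identify the children of the root of $\tau^0_{\cm(\tau^0)}$ with the marked fringe subtrees of $\tau^0$ sitting in the exploration stack just after the first marked vertex has been visited.

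Realizing $\tau$ from the sequence $((X_i,Z_i))$, so that $G=\Card(\tau)$ and (on $\{N\le G\}$) $u_0$---the smallest element of $\cm(\tau^0)$ in lexicographic (hence depth-first) order---is the $N$-th vertex visited, a direct reading of the walk shows that $R_{u_0}(\tau^0)$ is precisely the content of the exploration stack once $u_0$ has been popped. Hence $\Card(R_{u_0}(\tau^0))=1+\sum_{i=1}^N(X_i-1)$, which conditionally on $\{N\le G\}$ has the law of $\tilde X$. By the strong Markov property of $((X_i,Z_i))$ at the stopping time $N$, conditionally on $\tilde X$ the marked fringe subtrees $\bigl(F_u(\tau^0),\cm(F_u(\tau^0))\bigr)$ for $u\in R_{u_0}(\tau^0)$ are $\tilde X$ independent copies of $(\tau,\cm(\tau))$. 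Each is retained in $R_{u_0}^{\cm(\tau^0)}(\tau^0)$ independently with probability $\gamma=\P(\cm(\tau)\neq\emptyset)$, so the number of retained subtrees---which by construction is $k_\emptyset(\tau^0_{\cm(\tau^0)})$---is binomial with parameters $(\tilde X,\gamma)$, i.e.\ distributed as $Y$.

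For the full branching property I would then invoke the recursion \reff{eq:rec-tA}: each $F_i(\tau^0_{\cm(\tau^0)})$ equals $\phi\bigl(F_{u_i}(\tau^0),\cm(F_{u_i}(\tau^0))\bigr)$, where the kept $F_{u_i}(\tau^0)$ are independent copies of $(\tau,\cm(\tau))$ conditioned on $\{\cm(\tau)\neq\emptyset\}$, i.e.\ of $(\tau^0,\cm(\tau^0))$. By the very definition of $\phi$ the $F_i(\tau^0_{\cm(\tau^0)})$ are therefore independent copies of $\tau^0_{\cm(\tau^0)}$, so $\tau^0_{\cm(\tau^0)}$ is a GW tree with offspring law equal to that of $Y$.

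Finally, for the criticality $\E[Y]=1$, I would set $T=N\wedge G$. The hypothesis $p(k)q(k)>0$ for some $k$ gives $\E[N]=1/\sum_k p(k)q(k)<\infty$ and hence $\E[T]<\infty$; Wald's identity applied to the centered i.i.d.\ increments $(X_i-1)$ yields $\E[S_T]=0$, and combined with $S_G=-1$ (which is \reff{eq:=tree}) this reads $\E[S_N\ind_{\{N\le G\}}]=1-\gamma$, whence $\E[\tilde X]=1/\gamma$ and $\E[Y]=\gamma\,\E[\tilde X]=1$. The main delicate step I anticipate is the bookkeeping when invoking the strong Markov property at $N$ under the conditioning $\{N\le G\}$: one has to verify that the subtrees attached at the vertices of $R_{u_0}(\tau^0)$ really do form i.i.d.\ copies of $(\tau,\cm(\tau))$ conditionally on $\tilde X$, so that the binomial thinning producing $Y$ is legitimate.
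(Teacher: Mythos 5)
Your proposal is correct and follows essentially the same route as the paper: decomposition at the first marked vertex $u_0$, identification of $\Card(R_{u_0})$ with $\tilde X$ via the depth-first walk, binomial thinning with parameter $\gamma$ to get the law of $Y$, the recursion \reff{eq:rec-tA} for the branching property, and Wald's identity for criticality. The only (harmless) variation is that you apply Wald at the stopping time $N\wedge G$ and use $S_G=-1$ directly, whereas the paper applies it at $N$ and then treats the event $\{N>G\}$ by the strong Markov property at $G$; both give $\gamma\,\E[\tilde X]=1$.
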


\subsection{Proof of Lemma \ref{lem:cardMt}}
\label{sec:p-lem}
In order to simplify notation, we write $\tilde \tau$ for $\tau^0_{\cm(\tau
  ^0)}=\phi(\tau^0, \cm(\tau^0))$ and for $u\in \tau^0$, we set $R_u$
for $R_u(\tau^0)$.

\begin{lem}
   \label{lem:tildet=GW}
The random tree $\tilde \tau$ is a GW tree with offspring distribution
the law of $Y$. 
\end{lem}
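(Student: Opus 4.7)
The plan is to exploit the depth-first exploration of $\tau^0$ to identify the offspring distribution at the root of $\tilde\tau$ with the law of $Y$, and then to iterate via the branching property. The smallest marked vertex $u_0$ of $\tau^0$ (in lexicographic order) is exactly the first marked vertex encountered by the depth-first walk: writing $X_i$ for the number of children of the $i$-th visited vertex and $Z_i$ for its mark, the index $N=\inf\{k\geq 1;\, Z_k=1\}$ is the depth-first position of $u_0$ and $G=\inf\{k\geq 1;\,\sum_{i=1}^k(X_i-1)=-1\}$ is the total size of $\tau$. By \reff{eq:=tree} together with the standard depth-first encoding of ordered rooted trees, after visiting the first $k\leq G$ vertices the ``pending'' vertices (those whose parent has been visited but which have not themselves been visited) form a set of cardinality $1+\sum_{i=1}^k(X_i-1)$; applied at $k=N$, this pending set is exactly $R_{u_0}(\tau^0)$. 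Since $\tau^0$ has by definition the law of $\tau$ conditionally on $\{\cm(\tau)\neq\emptyset\}=\{N\leq G\}$, I deduce that $\Card(R_{u_0}(\tau^0))$ is distributed as $\tilde X$.

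Next, I would apply the branching property of $\tau$ at the depth-first stopping time $N$: conditionally on $R_{u_0}(\tau^0)=\{u_1<\cdots<u_m\}$ and on the explored prefix, the marked fringe subtrees $(F_{u_j}(\tau^0),\cm(\tau^0)\cap F_{u_j}(\tau^0))$, $1\leq j\leq m$, are i.i.d., each distributed as the unconditioned marked GW tree $(\tau,\cm(\tau))$. Each of them independently contains a marked vertex with probability $\gamma$, so
\[
k_\emptyset(\tilde\tau)=\Card\bigl(R^{\cm(\tau^0)}_{u_0}(\tau^0)\bigr)
\]
is, conditionally on $\Card(R_{u_0}(\tau^0))$, binomial with parameters $(\Card(R_{u_0}(\tau^0)),\gamma)$. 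Combined with the previous step this gives $k_\emptyset(\tilde\tau)\stackrel{d}{=}Y$. Moreover, conditionally on the number of ``kept'' fringe subtrees, these subtrees are i.i.d.\ copies of $(\tau,\cm(\tau))$ conditioned on having a non-empty mark set, i.e., i.i.d.\ copies of $(\tau^0,\cm(\tau^0))$.

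Finally, the recursive definition \reff{eq:rec-tA} reads $F_i(\tilde\tau)=\phi(F_{u_{j_i}}(\tau^0),\cm(\tau^0)\cap F_{u_{j_i}}(\tau^0))$, which is the image under $\phi$ of an independent copy of $(\tau^0,\cm(\tau^0))$, hence distributed as $\tilde\tau$. An induction on $\Card(\tau^0)$ (a.s.\ finite by the critical hypothesis) then produces the branching property for $\tilde\tau$ with offspring law $Y$. The main obstacle is the rigorous justification of the previous paragraph, namely that the unexplored fringe subtrees remain \emph{unconditioned} i.i.d.\ copies of $\tau$ despite the global conditioning $\cm(\tau)\neq\emptyset$: the conditioning is already absorbed by the presence of $u_0$ in the explored prefix, and the strong Markov property of the depth-first walk at the stopping time $N$ is the right tool to formalize this.
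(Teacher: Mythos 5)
Your proposal is correct and follows essentially the same route as the paper: identify $\Card(R_{u_0}(\tau^0))$ with $\tilde X$ via the depth-first count $1+\sum_{i=1}^{N}(X_i-1)$ (the paper obtains the same identity by applying \reff{eq:=tree} to the truncated tree $R_{u_0}(\bt)\cup\{v\le u_0\}$), use the branching property to see the fringe subtrees rooted at $R_{u_0}$ as i.i.d.\ copies of $(\tau,\cm(\tau))$, thin by $\gamma$ to get the binomial count $Y$, and recurse through $\phi$ using that the retained subtrees are i.i.d.\ copies of $(\tau^0,\cm(\tau^0))$. The ``main obstacle'' you flag is exactly what the paper resolves by the branching property at $u_0$, so no genuine gap remains.
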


\begin{proof}
  Let $u_0$  be the  smallest (for the  lexicographic order)  element of
  $\cm(\tau^0)$.   The  branching property  of  GW  trees implies  that,
  conditionally  given  $u_0$  and  $R_{u_0}$, the  fringe  subtrees  of
  $\tau^{0}$     rooted     at     the    vertices     in     $R_{u_0}$,
  $(S_{u}(\tau^0),  u\in R_{u_0})$  are independent  and distributed  as
  $\tau$.  Recall  notation \reff{eq:A_u}  so  that  the set  of  marked
  vertices    of    the    fringe    subtree   rooted    at    $u$    is
  $\cm_u(\tau^0)=\cm(\tau^0)     \bigcap      F_{u}(\tau^0)$.     Define
  $\tilde \cm_u(\tau^0)=\{v; \, uv\in \cm_u(\tau^0)\}$ the corresponding
  marked vertices  of $S_u(\bt)$.  Then, the  construction of  the marks
  $\cm(\tau)$    implies   that    the   corresponding    marked   trees
  $((S_u(\tau^0), \tilde \cm_u(\tau^0)),  u\in R_{u_0})$ are independent
  and   distributed   as   $(\tau,    \cm(\tau))$.   Notice   that   for
  $u \in  R_{u_0}$, the fringe  subtree $F_u(\tau^0)$ contains  at least
  one mark iff $u$ belongs to
$$ R_{u_0}^{\cm(\tau^0)} =\left\{ u\in R_{u_0};
    \, \exists v\in \cm(\tau^0) \text{  such that  } u \in
    \anc(v)\right\}.$$
  Then by
  considering only the fringe subtrees  containing at least one mark, we
  get  that, conditionally  on $  R_{u_0}^{\cm(\tau^0)} $,  the subtrees
  $((S_u(\tau^0),   \tilde \cm_u(\tau^0)),  u\in   R_{u_0}^{\cm(\tau^0)})$  are
  independent  and distributed  as $(\tau^0,  \cm(\tau^0))$.  We  deduce
  from   the   recursive   construction   of   the   map   $\phi$,   see
  \reff{eq:rec-tA},  that  $\tilde  \tau$  is a  GW  tree.   Notice
  that the
  offspring distribution of  $\tilde \tau$ is given  by the distribution
  of the cardinal of $ R_{u_0}^{\cm(\tau^0)}$.
  We now compute the corresponding offspring distribution. We first give
  an elementary  formula for the  cardinal of $R_u(\bt)$.  Let  $\bt \in
  \T_0$ and $u\in \bt$.  Consider the tree $\bt'=R_u(\bt) \bigcup \{v\in
  \bt; \, v\leq u\}$. Using \reff{eq:=tree} for $\bt'$, we get:
\[
-1=\sum_{v\in \bt'} (k_v(\bt') -1)
= \sum_{v\in \bt; \, v\leq u} (k_v(\bt') -1) + \sum_{v\in R_u(\bt)} (-1).
\]
This gives  $\Card(R_u(\bt))=1+ \sum_{v\in  \bt; \, v\leq  u} (k_v(\bt')
-1)$.  We  deduce from the  definition of  $\tilde X$ that  $\Card(R_{u_0})$ is
distributed as $\tilde  X$.  We deduce from the first  part of the proof
that   conditionally   on    $\Card(R_{u_0})$,   the   distribution   of
$\Card(R_{u_0}^{\cm(\tau^0)})$     is     binomial    with     parameter
$(\Card(R_{u_0}(\tau^0)),  \gamma)$.   This  gives  that  the  offspring
distribution of $\tilde \tau$ is given by the law of $Y$.
\end{proof}

\begin{lemma}\label{lem:critique}
The GW tree $\tilde \tau$ is critical. 
\end{lemma}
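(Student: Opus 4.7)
The plan is to use the fact that a Galton--Watson tree whose offspring law satisfies \reff{eq:assumption} falls into exactly one of three regimes: subcritical (a.s.\ finite, finite expected total progeny), critical (a.s.\ finite, infinite expected total progeny), or supercritical (positive probability of being infinite). Since Lemma \ref{lem:tildet=GW} identifies $\tilde\tau$ as a GW tree with offspring distribution the law of $Y$, and Lemma \ref{lem:LY=ok} ensures that this law satisfies \reff{eq:assumption}, it suffices to establish two things: (a) $\tilde\tau$ is a.s.\ finite, and (b) $\E[\Card(\tilde\tau)]=+\infty$.

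For (a), I would invoke \reff{eq:ta=a} to write $\Card(\tilde\tau)=\Card(\cm(\tau^0))=M(\tau^0)\leq \Card(\tau^0)$. Since $p$ is critical, $\tau$ is a.s.\ finite, and therefore so is $\tau^0$; in particular $\Card(\tilde\tau)<+\infty$ almost surely, ruling out the supercritical case.

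For (b), the main computation is $\E[M(\tau)]=+\infty$. Writing $Z_n$ for the size of generation $n$ of $\tau$, criticality gives $\E[Z_n]=\mu^n=1$. Conditionally on the tree up to generation $n$, the offspring counts $(k_u(\tau),\,|u|=n)$ are i.i.d.\ with law $p$, so Tonelli yields
\begin{equation*}
\E[M(\tau)]=\sum_{n=0}^{+\infty}\E\!\left[\sum_{u\in\tau,\,|u|=n} q(k_u(\tau))\right]=\sum_{n=0}^{+\infty}\E[Z_n]\sum_{k\in\N}p(k)q(k)=\sum_{n=0}^{+\infty}\sum_{k\in\N}p(k)q(k).
\end{equation*}
By hypothesis there exists $k$ with $p(k)q(k)>0$, so the inner sum is a strictly positive constant and $\E[M(\tau)]=+\infty$. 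Since $\tau^0$ is $\tau$ conditioned on $\{M(\tau)>0\}$, which has probability $\gamma>0$, we get $\E[\Card(\tilde\tau)]=\E[M(\tau^0)]=\gamma^{-1}\E[M(\tau)]=+\infty$, ruling out the subcritical case.

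Combining (a) and (b) with the trichotomy above forces $\tilde\tau$ to be critical, i.e.\ $\E[Y]=1$. The only delicate point is the generation-wise decomposition in (b); this is standard for GW trees but deserves an explicit justification of the conditional independence of $(k_u(\tau),\,|u|=n)$ from the generation size $Z_n$. An alternative would be to verify $\gamma\E[\tilde X]=1$ directly from the definitions of $\tilde X$, $N$, and $G$ via an optional stopping argument for the random walk $S_k=\sum_{i=1}^k(X_i-1)$, but the route through $\E[\Card(\tilde\tau)]=+\infty$ seems cleaner and bypasses any subtlety about stopping times that need not be integrable.
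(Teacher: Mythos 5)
Your proof is correct, but it takes a genuinely different route from the paper's. The paper verifies $\E[Y]=\gamma\E[\tilde X]=1$ by a direct computation: it applies Wald's equality to the stopping time $N$ for the walk $S_k=\sum_{i=1}^k(X_i-1)$, uses the deterministic identity $\sum_{i=1}^G(X_i-1)=-1$ (a consequence of \reff{eq:=tree}), and invokes the strong Markov property at $G$ to evaluate $\E\bigl[\sum_{i=1}^N(X_i-1)\ind_{\{N\le G\}}\bigr]$ exactly. You instead sidestep any computation of $\E[\tilde X]$ by invoking the trichotomy for GW trees whose offspring law satisfies \reff{eq:assumption}: almost sure finiteness of $\tilde\tau$, which you get for free from $\Card(\tilde\tau)=M(\tau^0)\le\Card(\tau^0)<+\infty$ a.s.\ via \reff{eq:ta=a}, excludes $\E[Y]>1$, while $\E[\Card(\tilde\tau)]=\sum_{n\ge 0}\E[Y]^n=+\infty$ excludes $\E[Y]<1$. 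Your generation-wise computation $\E[M(\tau)]=\sum_{n\ge 0}\E[Z_n]\sum_{k}p(k)q(k)=+\infty$ is a correct application of the iterated branching property (conditionally on the tree up to level $n$, the offspring numbers at level $n$ are i.i.d.\ with law $p$), and the step $\E[M(\tau^0)]=\gamma^{-1}\E[M(\tau)]$ is fine since $M(\tau)\ind_{\{M(\tau)>0\}}=M(\tau)$. What your approach buys is robustness: it avoids the Wald/strong-Markov bookkeeping entirely (note the paper's own write-up even refers to an undefined ``$T$'' at that point). What it gives up is the exact identity $\gamma\E[\tilde X]=1$, which the paper obtains as a by-product but does not use beyond criticality. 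Both arguments rely on Lemmas \ref{lem:tildet=GW} and \ref{lem:LY=ok} in the same way.
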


\begin{proof}
  Since the  offspring distribution is the  law of $Y$ we  need to check
  that    $\E[Y]=1$ that is $\gamma\E[\tilde X]=1$ since $Y$ is
  conditionally on $\tilde X$ binomial with parameter $(\tilde X,
  \gamma)$. 

  Recall $N$ has finite expectation as $\P(Z_1=1)>0$, is not independent 
  of $(X_{i})_{i\in \mathbb{N}^{*}}$ and is  a stopping time  with respect  
  to the filtration  generated by $((X_i, Z_i), i\in \N ^*)$. 
  Using Wald's equality and
  $\E[X_i]=1$, we get $\E\left[\sum^{N}_{i=1}(X_{i}-1)\right]=0$ and
  thus using the definition of $\tilde X$ as well as \reff{eq:N<G}:
\[
\gamma \E[\tilde X] =\gamma+\E\left[\sum^{N}_{i=1}(X_{i}-1) \ind_{\{N\leq
  G\}}\right]= \gamma-\E\left[\sum^{N}_{i=1}(X_{i}-1) \ind_{\{N>G\}}\right].
\]
We have:
\begin{align*}
\E\left[\sum^{N}_{i=1}(X_{i}-1) \ind_{\{N>G\}}\right]
&=\E\left[\sum^{G}_{i=1}(X_{i}-1) \ind_{\{N>G\}}\right] + \P(N>G)
\E\left[\sum^{N}_{i=1}(X_{i}-1) \right] \\
&= - \P(N>G)\\
&= \gamma-1,
\end{align*}
where we used the  strong Markov property of  $((X_i, Z_i), i\in \N  ^*)$ at the
stopping time $G$ for the first equation, the definition of $T$ and
Wald's equality for the second, and   \reff{eq:N<G} for the third. 
We deduce that $\E[Y]=\gamma \E[\tilde X] =1$, which ends the proof. 
\end{proof}

\subsection{Proof of \reff{eq:limrM}}
\label{sec:p-ratio}

According to Lemma \ref{lem:cardMt} and \reff{eq:ta=a},  we have that  $M(\tau^0)$ is
distributed as the total size of a critical GW whose offspring
distribution satisfies \reff{eq:assumption}. The proof of Proposition
4.3 of \cite{ad14} (see Equation (4.15) in \cite{ad14}) entails that  if $\tau'$
is a critical GW tree, then, if $d$ denotes the span of the random variable
$\Card(\tau')-1$, we have
\[
\lim_{n\rightarrow\infty}\frac{\P(\Card(\tau')\in[n+1,n+1+d))}{\P(\Card(\tau')\in[n,
 n+d))}=1.
\]

\section{Protected nodes}\label{sec:protected}

Recall that a node of a tree $\bt$ is protected if it is not a leaf
and none of its offsprings is a leaf. We denote by $A(\bt)$ the number
of protected nodes of the tree $\bt$.

\begin{theorem}\label{thm:protected}
Let $\tau$ be a critical GW tree with offspring distribution $p$
satisfying \reff{eq:assumption} and let $\tau^*$ be the associated
Kesten's tree. Let $\tau_n$ be a random tree
distributed as $\tau$ conditionally given $\{A(\tau)=n\}$.
Then: $$ \lim_{n\longrightarrow +\infty}\dist(\tau_n)=\dist(\tau^{*}).$$
\end{theorem}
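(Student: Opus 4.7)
The strategy is to recast the count $A(\tau)$ of protected nodes as a mark count in the framework of Section~\ref{sec:marked}, applied to a pruned version of $\tau$, and then to invoke Theorem~\ref{thm:main}.

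First, I would introduce the pruned tree $\hat{\tau} = \{u \in \tau : k_u(\tau) \geq 1\}$ (the subtree of non-leaves of $\tau$) and, for $u \in \hat{\tau}$, the number $L_u = k_u(\tau) - k_u(\hat{\tau})$ of leaf children of $u$ in $\tau$. By definition, $u$ is protected in $\tau$ iff $u \in \hat{\tau}$ and $L_u = 0$. The branching property of $\tau$ yields that, conditionally on $\hat{\tau} \neq \emptyset$, the tree $\hat{\tau}$ is a GW tree with offspring distribution $\tilde{p}(j) = \P(\mathrm{Bin}(K,1-p(0)) = j \mid K \geq 1)$ where $K \sim p$. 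A short computation checks that $\tilde{p}$ inherits assumption~\reff{eq:assumption} and has mean $1$, so $\hat{\tau}$ is critical. Moreover, given $\hat{\tau}$, the family $(L_u)_{u \in \hat{\tau}}$ is conditionally independent, with the law of $L_u$ depending only on $k_u(\hat{\tau})$. Setting $\tilde{q}(j) = \P(L_u = 0 \mid k_u(\hat{\tau}) = j)$, the couple $(\hat{\tau}, \{u \in \hat{\tau} : L_u = 0\})$ is thus a marked critical GW tree in the sense of Section~\ref{sec:marked}, and $A(\tau) = M(\hat{\tau})$.

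Since $\tilde{p}(k)\tilde{q}(k) > 0$ for any $k \geq 1$ with $p(k) > 0$, Theorem~\ref{thm:main} applied to $(\hat{\tau}, M(\hat{\tau}))$ gives $\dist(\hat{\tau} \mid A(\tau) = n) \to \dist(\hat{\tau}^*)$, where $\hat{\tau}^*$ is the Kesten tree associated to $\tilde{p}$. To lift this from $\hat{\tau}$ to $\tau$, I would observe that $\tau$ is recovered from $\hat{\tau}$ by attaching $L_u$ leaves to each vertex $u \in \hat{\tau}$, a reconstruction that is continuous for the local topology. The conditioning $A(\tau) = n$ biases the leaf counts, but on any fixed-depth neighborhood of the root this bias vanishes as $n \to \infty$, by an argument parallel to the one in the proof of Proposition~\ref{prop:ratio}. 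It follows that $\tau$ conditioned on $A(\tau) = n$ converges locally to the tree $\tilde{\tau}^*$ obtained from $\hat{\tau}^*$ by independently attaching leaves with the natural conditional law $\nu_{k_u(\hat{\tau}^*)}$.

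The last and most delicate step is to identify $\tilde{\tau}^*$ with the Kesten tree $\tau^*$ associated to $p$. This amounts to showing that pruning and the Kesten construction commute. Off the spine this is immediate from the independence structure of non-spine subtrees; on the spine one must check that if a spine vertex of $\tau^*$ has offspring distribution $p^*$ then, after separating the distinguished spine child, the non-spine non-leaf children, and the leaf children, the resulting non-leaf count has law $\tilde{p}^*$ while the leaf count has law $\nu_{k_u(\hat{\tau}^*)}$. This reduces to the identity $k\binom{k-1}{j-1} = j\binom{k}{j}$ applied to the joint distribution. I expect this spine identification to be the main obstacle; once it is in place, the theorem follows by assembling the three steps.
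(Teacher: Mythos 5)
Your first paragraph is exactly the paper's key construction: prune the leaves of $\tau$ (restricted to a non-leaf root) to get a critical GW tree with offspring law $p_{\N^*}$, and observe that the protected nodes of $\tau$ correspond to the vertices of the pruned tree carrying no leaf children, which is a marking in the sense of Section~\ref{sec:marked} with mark function $q(k)=p(k)(1-p(0))^{k-1}/p_{\N^*}(k)$ for $k\geq 1$. (The fact that, given the pruned tree, the leaf counts are independent with a law depending only on the out-degree is the content of the paper's reconstruction lemma; the paper runs the construction in the opposite direction, grafting $W(u)$ leaves onto the pruned GW tree and checking the result is distributed as $\tau^0$.) Where you diverge is in how you exploit this. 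The paper never transfers a local limit across the pruning map: it only needs the scalar identity that $A(\tau^0)$ and $M(\tau^0_{\N^*})$ have the same distribution, which together with Lemma~\ref{lem:ratio} yields the ratio limit \reff{eq:ratio}; then, because $A$ satisfies the additivity property \reff{eq:additivity}, Theorem 3.1 of \cite{ad14} (the additive-functional analogue of Proposition~\ref{prop:ratio}) applies directly to $\tau$ conditioned on $\{A(\tau)=n\}$ and finishes the proof. You instead apply Theorem~\ref{thm:main} to the pruned tree and then try to push the local limit back through the reconstruction.

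Your route is viable in principle but leaves two genuine gaps that the paper's route avoids. First, to reconstruct $\tau$ near the root you need joint local convergence of the pruned tree \emph{together with its leaf counts} $(L_u)$ under the conditioning, and Theorem~\ref{thm:main} only gives convergence of the tree; since $\{M(\hat\tau)=n\}$ is a function of the indicators $\ind_{\{L_u=0\}}$, the conditional law of the decoration given the tree is genuinely biased, and showing that this bias vanishes on fixed neighborhoods requires redoing the computation of Proposition~\ref{prop:ratio} for marked trees, not merely citing it. Second, the identification of the decorated limit with $\tau^*$ (your claim that pruning commutes with the Kesten construction) is a real computation on the spine: your identity $k\binom{k-1}{j-1}=j\binom{k}{j}$ is indeed the right one and shows that $1+\mathrm{Bin}(K^*-1,1-p(0))$ has the size-biased law of $p_{\N^*}$, but you must still match the leaf-count law, the uniform placement of leaves, and the uniform choice of the spine child. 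Both gaps are fillable, but the observation you are missing is that $A$ is additive in the sense of \reff{eq:additivity}, which reduces the whole theorem to the one-dimensional ratio limit and makes the transfer of local limits unnecessary.
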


\begin{proof}
Notice that $\P(A(\tau)=n)>0$ for all $n\in \N$. Notice that the
functional $A$ satisfies the additive property of \cite{ad14}, namely
for every $\bt\in\T$, every $x\in\mathcal{L}_0(\bt)$ and every $\bt'\in\T$ that is not reduced to the
root, we have
\begin{equation}\label{eq:additivity}
A(\bt\circledast _x\bt')=A(\bt)+A(\bt')+D(\bt,x)
\end{equation}
where $D(\bt,x)=1$ if $x$ is the  only child of its first ancestor which
is  a  leaf  (therefore  this  ancestor  becomes  a  protected  node  in
$\bt\circledast  _x\bt'$)  and  $D(\bt,x)=0$  otherwise.   According  to
Theorem 3.1 of \cite{ad14}, to end the proof it is enough to check that
\begin{equation}\label{eq:ratio}
\lim_{n\to+\infty}\frac{\P(A(\tau)=n+1)}{\P(A(\tau)=n)}=1.
\end{equation}

For        a       tree        $\bt\neq       \{\emptyset\}$,        let
$\bt_{\mathbb{N}^{*}}=\phi(\bt,\bt\setminus  \cl_0(\bt))$  be  the  tree
obtained from $\bt$ by removing the leaves. Let $\tau^0$ be a random tree 
distributed as $\tau$  conditioned to $\{k_{\emptyset}(\tau)>0\}$. Using 
Theorem 6 and Corollary 2  of   \cite{r15}  with   $A=\N^*$  (or Lemma \ref{lem:cardMt}  with
$q(k)=\ind_{\{k>0\}}$),  we  have   that  $\tau^0_{\mathbb{N}^{*}}$  is  a
critical GW tree with offspring distribution:
\[
p_{\mathbb{N}^{*}}(k)=\sum^{+\infty}_{n=\max(k,1)}
p(n)\, \binom{n}{k}(p(0))^{n-k}(1-p(0))^{k-1}, 
\quad k\in \N.
\]
Conditionally  given  $\{\tau^0_{\N^*}=\bt\}$,   we  consider  independent
random  variables   $(W(u),u\in\bt)$  taking  values  in   $\N^*$  whose
distributions  are  given  for   all  $u\in\bt$  by  $\P(W(u)=0)=0$  for
$k_u(\bt)=0$ and otherwise for $k_u(\bt)+n>0$ (remark that
$p_{\N^*}(k_u(\bt))>0$), by
\[
\P(W(u)=n)=
\frac{p(k_u(\bt)+n)}{p_{\N^*}(k_u(\bt))}
\binom{k_u(\bt)+n}{n}p(0)^n(1-p(0))^{k_u(\bt)-1} . 
\]
In particular for $k_u(\bt)>0$, we have:
\begin{equation}
   \label{eq:W=0}
\P(W(u)=0)=
\frac{p(k_u(\bt))}{p_{\N^*}(k_u(\bt))}
(1-p(0))^{k_u(\bt)-1} . 
\end{equation}

Then, we define a new tree $\hat\tau$ by grafting, on every vertex $u$
of $\tau_{\N^*}^0$, $W(u)$ leaves in a uniform manner, see  Figure
\ref{fig:elagage}.

\begin{figure}[H]
\includegraphics[width=12cm]{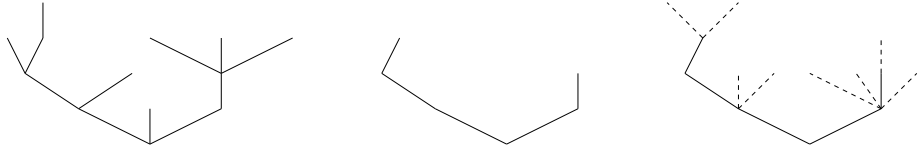}
\caption{The trees $\tau^0$, $\tau^0_{\N^*}$ and $\hat \tau$}\label{fig:elagage}
\end{figure}

More precisely, given $\tau_{\N^*}^0$ and $(W(u),u\in \tau_{\N^*}^0)$,
we define a tree $\hat\tau$ and a random map $\psi:
\tau_{\N^*}^0\longmapsto \hat\tau$ recursively in the following way. We set
$\psi(\emptyset)=\emptyset$. Then, given
$k_\emptyset(\tau_{\N^*}^0)=k$, we set
$k_\emptyset(\hat\tau)=k+W(\emptyset)$. We also consider a family
$(i_1,\ldots,i_{k})$ of integer-valued random variables such that
$(i_1,i_2-i_1,\ldots, i_k-i_{k-1}, W(u)+k+1-i_{k})$ is a uniform
positive partition
of $W(u)+k+1$. Then, for every $j\le k$ such that $j\not\in
\{i_1,\ldots,i_{k}\}$, we set $k_j(\hat\tau)=0$ i.e. these are leaves
of $\hat\tau$. For every $1\le j\le k$, we set $\psi(j)=i_j$ and we
apply to them the same construction as for the root and so on.

\begin{lemma}
The new tree $\hat \tau$ is distributed as the  original tree $\tau^0$.
\end{lemma}

\begin{proof}
Let $\bt \in \mathbb{T}_{0}$. As $\P(\hat \tau=\{\emptyset\})=0$,  we assume that 
$k_{\emptyset}(\bt)>0$. Let $\bt_{\mathbb{N}^{*}}$  be  the  tree
obtained from $\bt$ by removing the leaves. Using \reff{eq:=tree}, we have: 
\begin{align*}
\P(\hat \tau=\bt)
&=\prod_{u \in t_{\mathbb{N}^{*}}}p_{\mathbb{N}^{*}}(k_{u}(\bt_{\mathbb{N}^{*}}))\P(W(u)=k_u(\bt)-k_u(\bt_{\mathbb{N}^{*}}))\frac{1}{\binom{k_u(\bt)}{k_u(\bt)-k_u(\bt_{\mathbb{N}^{*}})}}\\
&=\frac{\P(\tau=t)}{1-p(0)}\\
&=\P(\tau^0=t).
\end{align*}              
\end{proof}

Notice that  the protected nodes  of $\hat \tau$  are exactly the  nodes of
$\tau^0_{\N^*}$ on which we did not add leaves i.e. for which $W(u)=0$. If
we set $\cm(\tau^0_{\N^*})=\{u\in\tau^0_{\N^*},\ W(u)=0\}$, we have $M(\tau^0_{\N^*})=A(\hat \tau)$.

Using \reff{eq:W=0}, we get that the corresponding mark function $q$ is given
by: 
\[
q(k)=\frac{p(k)(1-p(0))^{k-1}}{p_{\mathbb{N}^{*}}(k)}\ind_{\{k\geq
  1\}}.
\]
As $\hat \tau$ is distributed as $\tau^0$, we have:
\[
\lim_{n\to+\infty}\frac{\P(A(\tau^0)=n+1)}{\P(A(\tau^0)=n)}
=\lim_{n\to+\infty}\frac{\P(A(\hat\tau)=n+1)}{\P(A(\hat\tau)=n)}
=\lim_{n\to+\infty}\frac{\P(M(\tau^0_{\N^*})=n+1)}{\P(M(\tau^0_{\N^*})=n)}\cdot
\]
As $\tau^0_{\N^*}$ is a critical GW tree, we deduce from
Lemma \ref{lem:ratio} that
$$\lim_{n\to+\infty}\frac{\P(M(\tau^0_{\N^*})=n+1)}{\P(M(\tau^0_{\N^*})=n)}=1.$$
As $\P(A(\tau)=n)=\P(A(\tau)=n \vert
k_{\emptyset}(\tau)>0)\P(k_{\emptyset}(\tau)>0)$ and 
$\P(A(\tau)=n \vert
k_{\emptyset}(\tau)>0)=\P(A(\tau^0)=n)$
for $n\geq 2$,  
we obtain \reff{eq:ratio} and hence end the proof.
\end{proof}

\subsection*{Acknowledgements}
 
 We would like to thank the referee for his useful comments which helped to improve the paper. 

\bibliographystyle{abbrv}
\bibliography{biblio}

\end{document}